\begin{document}

\title*{Discontinuous Galerkin method for linear wave equations involving derivatives of the Dirac delta distribution}
\titlerunning{DG method for wave equations involving derivatives of the Dirac delta distribution}
% Use \titlerunning{Short Title} for an abbreviated version of
% your contribution title if the original one is too long
\author{Scott E. Field \and Sigal Gottlieb \and Gaurav Khanna \and Ed McClain}
% Use \authorrunning{Short Title} for an abbreviated version of
%\authorrunning{Field et al.}
% your contribution title if the original one is too long
%\institute{Name of First Author \at Name, Address of Institute, \email{name@email.address}
%\and Name of Second Author \at Name, Address of Institute \email{name@email.address}}

\institute{Scott E. Field \at
          Department of Mathematics, 
          Center for Scientific Computing \& Visualization Research, 
          University of Massachusetts, 
          Dartmouth, MA 02747. 
          \email{sfield@umassd.edu}
          \and
          Sigal Gottlieb \at
          Department of Mathematics, 
          Center for Scientific Computing \& Visualization Research, 
          University of Massachusetts, 
          Dartmouth, MA 02747. 
          \email{sgottlieb@umassd.edu}
          \and
          Gaurav Khanna \at
          Department of Physics, 
          Center for Scientific Computing \& Visualization Research, 
          University of Massachusetts,
          Dartmouth, MA 02747. \& 
          Department of Physics,
          University of Rhode Island,
          Kingston, RI 02881.
          \email{gkhanna@uri.edu}
          \and
          Ed McClain \at
          Department of Mathematics, 
          Center for Scientific Computing \& Visualization Research, 
          University of Massachusetts, 
          Dartmouth, MA 02747. 
          \email{emcclain@umassd.edu}
}

%
% Use the package "url.sty" to avoid
% problems with special characters
% used in your e-mail or web address

\maketitle

\vspace{-1in} 

\abstract{Linear wave equations sourced by a Dirac delta distribution $\delta(x)$ and its derivative(s) can serve as a model for many different phenomena. We describe a discontinuous Galerkin (DG) method to numerically solve such equations with source terms proportional to 
$\partial^n \delta /\partial x^n$. Despite the presence of singular source terms, which imply discontinuous or potentially singular solutions, our DG method achieves global spectral accuracy even at the source's location. Our DG method is developed for the wave equation written in fully first-order form. The first-order reduction is carried out using a distributional auxiliary variable that removes some of the source term's singular behavior. While this is helpful numerically, it gives rise to a distributional constraint. We show that a time-independent spurious solution can develop if the initial constraint violation is proportional to $\delta(x)$. Numerical experiments verify this behavior and our scheme's convergence properties by comparing against exact solutions.}

%\vspace{-0.25in} 

\section{Introduction}
\label{sec:Intro-dirac-field}
%\vspace{-0.1in} 

In this article we describe a discontinuous Galerkin (DG) 
method~\cite{Reed.W;Hill.T1973,Hesthaven2008,Cock01,cockburn1998runge,Cockburn.B1998,Cockburn.B;Karniadakis.G;Shu.C2000} for solving 
the wave equation
\begin{equation}
\label{eq:generic-dirac-field}
- \partial_t^2 \psi + \partial_x^2 \psi + V(x) \psi  = \sum_{n=0}^N a_n(t,x) \delta^{(n)}(x) \,, 
\end{equation}
where $x \in [a,b]$, $V$ is a potential, $\delta^{(n)}(x) = \partial_x^n \delta(x)$ is the $n^{\rm th}$ distributional derivative~\cite{friedlander1998introduction} of a Dirac delta distribution $\delta(x)$, and $a_n(t,x)$ are arbitrary (classical) functions. We let the functions $\psi_0(x) = \psi(0,x)$ and $\dot{\psi}_0(x) = \partial_t \psi(0,x)$ specify the initial data. Differential equations of the form~\eqref{eq:generic-dirac-field} arise when modeling phenomena driven by well-localized sources and have found applications as diverse 
as neuroscience~\cite{casti2002population,caceres2011analysis}, seismology~\cite{petersson2009stable,shearer2019introduction}, and gravitational wave physics~\cite{sundararajan2007towards, poisson2011motion}. As one example, when a rotating blackhole is perturbed by a small, compact object the relevant (Teukolsky) equation features terms proportional to $\delta^{(2)}(x)$ on the right-hand-side~\cite{sundararajan2007towards}. To solve Eq.~\eqref{eq:generic-dirac-field}, various ``regularized"  numerical approaches~\cite{engquist2005discretization,tornberg2004numerical} and schemes~\cite{sundararajan2007towards,sundararajan2008towards,field2021gpu,canizares2010pseudospectral,canizares2009simulations,jung2007spectral,lousto2005time,sopuerta2006finite} have been proposed. Most of these methods only treat source terms proportional to $\delta(x)$ and $\delta^{(1)}(x)$ and do not achieve spectral accuracy at $x=0$.

Discontinuous Galerkin methods are especially well suited for solving 
Eq.~\eqref{eq:generic-dirac-field} and, more broadly, problems with delta distributions.
Indeed, the solution's non-smoothness can be 
``hidden" at an interface between subdomains. Furthermore, the DG method
solves the weak form of the problem, a natural
setting for the delta distribution.

To the best of our knowledge, two distinct DG-based strategies have appeared in the literature for solving hyperbolic equations with $\delta$-singularities. Yang and Shu~\cite{yang2013discontinuous} show that when the source term features a Dirac delta distribution (but no distributional derivatives of them), by using $k^{\rm th}$ degree polynomials the error will converge in a negative-order norm. Post-processing techniques are then used to recover high-order accuracy in the $L_2$ norm so long as the solution is not required near the singularity. A different approach, and the one we follow here, is based on the observation that (i) the solution is smooth to the left and the right of the singularity and (ii) if the singularity is collocated with a subdomain interface then the effect of the Dirac delta distribution is to modify the numerical flux. This framework was originally proposed by Fan et al.~\cite{fan2008generalized} for the Schr{\"o}dinger equation sourced by a delta distribution $\delta(x)$ and later extended by Field et al.~\cite{field2009discontinuous} to solve Eq.~\eqref{eq:generic-dirac-field} with source terms proportional to $\delta(x)$ and $\delta^{(1)}(x)$. 
Building on previous work~\cite{fan2008generalized,field2009discontinuous},
the main contributions of our paper are 
(i) to show how the DG method proposed in Refs.~\cite{fan2008generalized,field2009discontinuous} can be readily extended to solve Eq.~\eqref{eq:generic-dirac-field} as well as 
(ii) to clarify the importance of satisfying a distributional constraint that arises when performing a first-reduction of this equation.
We also derive equations that directly relate the coefficients
$a_n(t,x)$ to the numerical flux modification rule.

To motivate the main idea, consider the advection equation
\begin{equation}
\label{eq:advection-dirac-field}
\partial_t \psi + \partial_x \psi = \cos(t) \delta^{(1)}(x) \,,
\end{equation}
whose inhomogeneous solution is
\begin{equation}
\label{eq:advectionSol1-dirac-field}
\psi(t,x) = \cos(t) \delta(x) + H(x)\sin(t-x) \,,
\end{equation}
where the Heaviside step function obeys $H(x)=0$ for $x<0$, and $H(x)=1$ for $x \ge 0$. 
Away from $x=0$ the solution is smooth, suggesting a spectrally-convergent basis
should be used. At $x=0$ the solution is both discontinuous and contains a
term proportional to $\delta(x)$. 

Our proposed DG scheme proceeds as follows.
First, we use a change of variable $\bar{\psi} = \psi - \cos(t) \delta(x)$ to 
remove the singular term and solve the modified equation,
\begin{equation}
\label{eq:advectionModified-dirac-field}
\partial_t \bar{\psi} + \partial_x \bar{\psi} = \sin(t) \delta(x) \,.
\end{equation}
We will show this is always possible,
works for singular source terms $\delta^{(n)}(x)$, and is easy to enact.
Next, we use a non-overlapping, multi-domain setup such that $x=0$
is one of interface locations; for example, using two subdomains we would have 
$\mathsf{D}^1 = [a,0]$ and $\mathsf{D}^2 = [0,b]$. 
In each element we expand $\bar{u}$ in a polynomial basis of degree $k$
denoting this numerical approximation $\bar{\psi}_h \in V_h$, where
\begin{equation}
V_h = \{ v : v |_{\mathsf{D}^j} \in {\cal P}^k (\mathsf{D}^j), j=1, \dots, N \} \,,
\end{equation}
and ${\cal P}^k (\mathsf{D}^j)$ denotes the space of polynomials of degree at most $k$
defined on subdomain $\mathsf{D}^j$.
In each element, we then follow the standard DG procedure by integrating each
elementwise residual against 
all test functions $v(x) \in V_h$. 
A key ingredient of any DG method is the choice of numerical flux, denoted here by $\psi^*$,
which couples neighboring subdomains. As we will show later on, 
the $\delta$ distribution's effect on the scheme is 
to modify the numerical flux 
according to $\psi^* \rightarrow \psi^* + \sin(t)$ at the left side of $\mathsf{D}^2$. 
This extra $\sin(t)$ term arises from the evaluation of the
integral,
\begin{equation}
\int_a^b \sin(t) \delta(x) v(x) dx = 
\int_{\mathsf{D}^1} \sin(t) \delta(x) v_1(x) dx + \int_{\mathsf{D}^2} \sin(t) \delta(x) v_2(x) dx = \sin(t) v_2(0) \,,
\end{equation}
in a manner that is consistent with the defining property of
a delta distribution, $\int f(x) \delta(x) dx = f(0)$, 
and the expectation that the delta distribution
should only affect the solution to the right of $x=0$ (ie upwinding).
Here $v_{1,2} = v |_{\mathsf{D}^{1,2}}$ denote the global test function
restricted to a subdomain.  
Fig.~\ref{Fig:advecCartoon-dirac-field} provides a schematic of this procedure.
The numerical solution to Eq.~\eqref{eq:advection-dirac-field} is then 
$\psi_h = \bar{\psi}_h + \cos(t) \delta(x)$.

\begin{figure*}[thb]
\begin{center}
\includegraphics[scale=0.35]{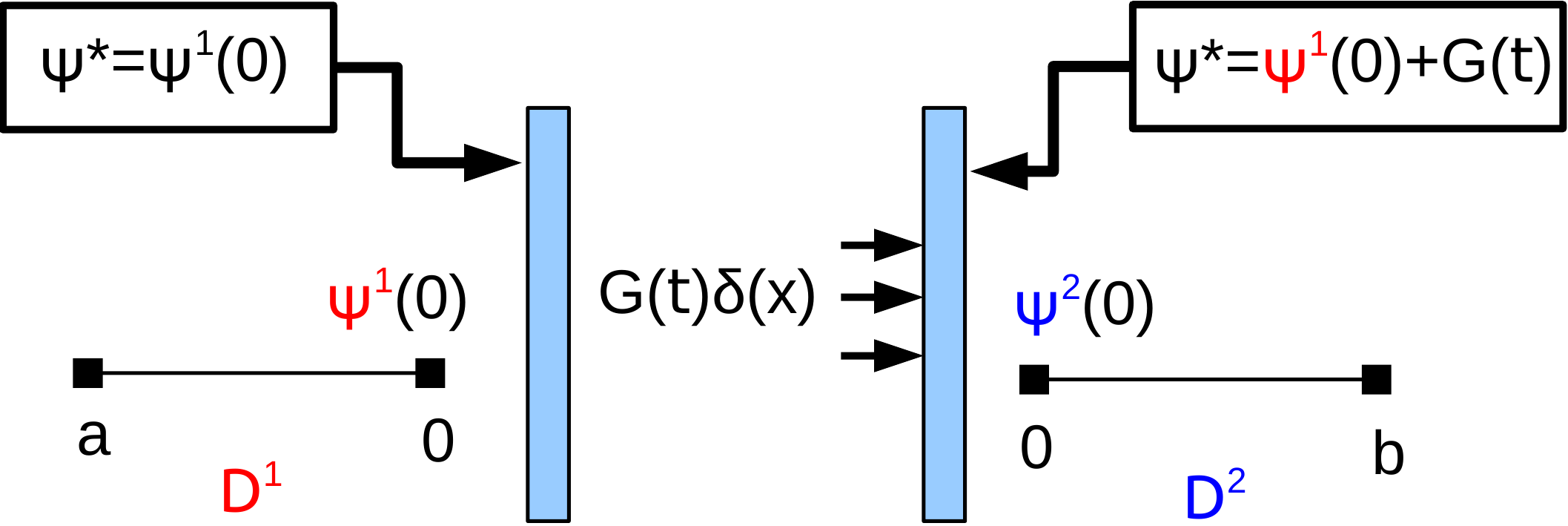}
\end{center}
\caption{Cartoon of the numerical flux modification enacted at $x=0$
to solve Eq.~\eqref{eq:advectionModified-dirac-field}. Here we show a generic
time-dependent source term $G(t)\delta(x)$ ``located between"
subdomain \textcolor{red}{$\mathsf{D}^{1}$} and \textcolor{blue}{$\mathsf{D}^{2}$} . Interfaces are represented schematically as
light blue rectangles.
The advection equation~\eqref{eq:advection-dirac-field} moves information from left-to-right, and accordingly
the upwinded numerical flux is modified to the right of the delta distribution. 
Sec.~\ref{sec:Intro-dirac-field} provides further discussion of the setup.}
\label{Fig:advecCartoon-dirac-field}
\end{figure*}

\begin{warning}{Remark}
When the coefficients $a_n(t,x)$ appearing in Eq.~\eqref{eq:generic-dirac-field} 
are functions of both independent variables, they can be put into the form assumed 
by Theorem~\eqref{thm:WaveMod-dirac-field} by the selection property of delta distributions. For 
example, $a_0(t,x) \delta(x)=a_0(t,0) \delta(x)$ 
or $a_1(t,x) \delta{}^{(1)}(x)= a_1(t,0) \delta{}^{(1)}(x) - \partial_x a_1(t,x) \rvert_{x=0}\delta{}(x)$.
Consequently, to streamline the discussion, we will exclusively focus on 
source terms of the form $\sum_{n=0}^N a_n(t) \delta^{(n)}(x)$ for the
remainder of this paper.
\end{warning}

\section{Reduction to a first-order system}
\label{sec:WaveModified-dirac-field}
%\vspace{-0.15in} 

Throughout, we use both an over--dot and superscript to denote $\partial / \partial t$
differentiation, for example 
$\partial a / \partial t = \dot{a}(t) = a^{(1)}(t)$,
and both a prime and superscript to denote $\partial / \partial x$, for example $\partial \delta / \partial x 
= \delta{}'(x) = \delta^{(1)}(x)$.

%\vspace{-0.35in} 
\subsection{Removing singular behavior}
%\vspace{-0.175in} 

The simple advection example introduced in Sec.~\ref{sec:Intro-dirac-field} demonstrates
how to remove a certain amount of singular behavior 
in the source term such that the new dependent variable, $\bar{\psi}$, 
contains no terms proportional to $\delta(x)$. The following theorem shows how to apply the
same procedure to Eq.~\eqref{eq:generic-dirac-field}.

\begin{theorem} \label{thm:WaveMod-dirac-field}
Consider Eq.~\eqref{eq:generic-dirac-field} with $V=0$.
Assume $a_{n}(t)$ has at least $n$ derivatives.
If $\psi$ is the exact solution to Eq.~\eqref{eq:generic-dirac-field}, then 
\begin{equation}
\label{eq:uBar-dirac-field}
\bar{\psi} = \psi - \sum_{i=0}^{\frac{N-1}{2}-1} \left[ 
\delta^{(2i)}(x) \sum_{n=0}^{\frac{N-1}{2} - 1 - i} a_{2n+2i+2}^{(2n)}(t)  + 
\delta^{(2i+1)}(x) \sum_{n=0}^{\frac{N-1}{2} - 1 - i} a_{2n+2i+3}^{(2n)}(t) \right] \,, 
\end{equation}
solves
\begin{equation}
\label{eq:generic-dirac-fieldMod-dirac-field}
- \partial_t^2 \bar{\psi} + \partial_x^2 \bar{\psi}  = G(t) \delta(x) + F(t) \delta{}'(x) \,,
\end{equation}
where
\begin{equation}
\label{eq:generic-dirac-fieldModSourceTerms-dirac-field}
G(t) = \sum_{n=0}^{(N-1)/2} a_{2n}^{(2n)}(t) \,, \qquad F(t) = \sum_{n=0}^{(N-1)/2} a_{2n+1}^{(2n)}(t) \,.
\end{equation}
\end{theorem}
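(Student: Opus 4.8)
The plan is to exploit the linearity of the wave operator $L := -\partial_t^2 + \partial_x^2$ together with the elementary distributional identity $\partial_x^k \delta^{(m)}(x) = \delta^{(m+k)}(x)$. Writing the subtracted term as $\Phi(t,x) = \psi - \bar{\psi}$, linearity gives $L\bar{\psi} = L\psi - L\Phi = \sum_{n=0}^N a_n(t)\delta^{(n)}(x) - L\Phi$, so it suffices to compute $L\Phi$ for the $\Phi$ displayed in~\eqref{eq:uBar-dirac-field} and check that all contributions of order $\delta^{(m)}$ with $m\ge 2$ cancel against $a_m(t)\delta^{(m)}(x)$, while the $m=0$ and $m=1$ residuals reproduce $G(t)\delta(x)$ and $F(t)\delta{}'(x)$.

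Rather than verify the closed form blindly, I would first re-derive it. Make the ansatz $\Phi = \sum_{k\ge 0} c_k(t)\delta^{(k)}(x)$ with the $c_k$ to be determined and $c_k \equiv 0$ for $k > N-2$. Since $\partial_x^2$ raises the delta-order by two while $\partial_t^2$ acts only on the coefficients, one has $L\Phi = \sum_k\bigl(c_k(t)\delta^{(k+2)}(x) - c_k^{(2)}(t)\delta^{(k)}(x)\bigr)$. Because the distributions $\{\delta^{(m)}\}_{m\ge0}$ are linearly independent, I can legitimately collect the coefficient of each $\delta^{(m)}$ in $L\bar{\psi}$; it equals $a_m(t) + c_m^{(2)}(t) - c_{m-2}(t)$. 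Demanding that this vanish for every $m\ge 2$ yields the two-step recursion $c_j(t) = a_{j+2}(t) + c_{j+2}^{(2)}(t)$ for $j\ge 0$, which, using $c_k\equiv0$ for $k>N-2$, can be solved downward from the top index and gives the closed form $c_j(t) = \sum_{p\ge 0} a_{j+2+2p}^{(2p)}(t)$, the sum truncating at $j+2+2p\le N$. A short induction on $j$, substituting the closed form into the recursion and re-indexing $p\mapsto p-1$, confirms it.

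It then remains to read off the results. Splitting $c_j$ according to the parity of $j$ (writing $j=2i$ and $j=2i+1$) reproduces exactly the two bracketed inner sums in~\eqref{eq:uBar-dirac-field}, and checking that the upper summation limits agree amounts to evaluating $\lfloor (N-2-j)/2\rfloor$ for odd $N$, which is where I expect the bookkeeping to be most error-prone. Finally, the surviving $m=0$ and $m=1$ coefficients are $G = a_0 + c_0^{(2)}$ and $F = a_1 + c_1^{(2)}$; inserting the closed forms for $c_0$ and $c_1$ and re-indexing collapses the derivative-of-$a$ terms into $\sum_{n=0}^{(N-1)/2}a_{2n}^{(2n)}$ and $\sum_{n=0}^{(N-1)/2}a_{2n+1}^{(2n)}$, matching~\eqref{eq:generic-dirac-fieldModSourceTerms-dirac-field}. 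Throughout, the hypothesis that $a_n$ possesses at least $n$ derivatives is precisely what guarantees every $a_{m}^{(2p)}$, and in particular the top term $a_{2n}^{(2n)}$ in $G$, is well defined; the argument is otherwise pure linear distribution theory with no analytic subtlety, so the real work is entirely the combinatorial index management.
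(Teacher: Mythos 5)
Your proposal is correct, and it is organized differently from the paper's own proof. The paper proceeds by a sequence of $(N-1)/2$ explicit substitutions $\bar{\psi}_{i+1} = \bar{\psi}_i - \sum_{n=0}^{N-2i-2} a_{n+2i+2}^{(2i)}(t)\,\delta^{(n)}(x)$, each of which strips off the two highest-order delta derivatives from the current source, iterating until only the $\delta$ and $\delta{}'$ terms remain; the closed form~\eqref{eq:uBar-dirac-field} is then asserted to follow (and ``can also be checked by direct computation''). You instead make a single ansatz $\Phi = \sum_k c_k(t)\delta^{(k)}(x)$ with undetermined coefficients, invoke linear independence of $\{\delta^{(m)}\}$ to match coefficients order by order, and solve the resulting two-step recursion $c_j = a_{j+2} + c_{j+2}^{(2)}$ downward from the top index, which yields $c_j = \sum_{p\ge 0} a_{j+2+2p}^{(2p)}$ and, after splitting by parity of $j$, exactly the bracketed sums in~\eqref{eq:uBar-dirac-field}; the surviving $m=0,1$ coefficients give $G$ and $F$ as in~\eqref{eq:generic-dirac-fieldModSourceTerms-dirac-field}. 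The two arguments encode the same computation — the paper's iterated substitutions are precisely the unrolling of your recursion — but your version buys a rigorous, self-contained derivation of the closed form (including the summation limits) rather than an iteration whose final formula is left to verification, and the appeal to linear independence makes the coefficient matching airtight. Two minor points: you should state explicitly, as the paper does, that one may assume $N$ odd without loss of generality by taking $a_N \equiv 0$ if necessary (your index bookkeeping silently relies on this), and it is worth confirming that the truncation $c_k \equiv 0$ for $k > N-2$ is consistent with the recursion at the top orders $m = N, N+1$ (it is: $c_{N-2} = a_N$ and $c_{N-1} = 0$).
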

\begin{proof}
Assume $N$ is odd, which is always possible by taking $a_N(t,x)=0$ if necessary.
Eq.~\eqref{eq:generic-dirac-field} can be transformed into Eq.~\eqref{eq:generic-dirac-fieldMod-dirac-field} by a sequence of 
$\left(N-1\right)/2$ substitutions of the form
\begin{equation*}
\bar{\psi}_{i+1} = \bar{\psi}_i - \sum_{n=0}^{N-2i-2} a_{n+2i+2}^{(2i)} (t) \delta^{(n)} (x) \,,
\end{equation*}
where we define $\bar{\psi}_{0} = \psi$. For example, we first substitute $\bar{\psi}_1$ into 
Eq.~\eqref{eq:generic-dirac-field}, generating a new wave equation for $\bar{\psi}_1$ with source
terms proportional to $\delta^{N(x)}$ and $\delta^{N-1}(x)$ removed. Next, we substitute
$\bar{\psi}_2$ into the PDE for $\bar{\psi}_1$, generating
a new wave equation for ${}^{(2)}\bar{\psi}$ with the source 
terms proportional to $\delta^{N-2}(x)$ and $\delta^{N-3}(x)$ removed. 
This process continues until we arrive at 
an equation for $\bar{\psi} = \bar{\psi}_{(N-1)/2}$
whose source terms are $G(t)\delta(x)$ and $F(t)\delta^{(1)}(x)$.
The final result can also be checked by direct computation. 
\end{proof}

%\vspace{-0.175in} 
%\begin{svgraybox}
\begin{warning}{Remark}
When $V \neq0$ we can still remove singular behavior from the source term, although we do not provide
a general expression as in Theorem~\ref{thm:WaveMod-dirac-field}. 
As a direct example, and assuming $V$ is differentiable, the differential equation
\begin{equation}
- \partial_t^2 \psi + \partial_x^2 \psi + V(x)\psi = 
a_0(t) \delta(x) + a_1(t) \delta^{(1)}(x) + a_2(t) \delta^{(2)}(x) + a_3(t) \delta^{(3)}(x) \,,
\end{equation}
can be transformed into 
\begin{equation} \label{eq:transformed_example-dirac-field}
- \partial_t^2 \bar{\psi} + \partial_x^2 \bar{\psi} + V(x) \bar{\psi} = 
\left[a_0 + a_2^{(2)} - a_2 V(0) + a_3 V^{(1)}(0) \right] \delta + 
\left[a_1 + a_3^{(2)} - a_3 V(0) \right] \delta{}^{(1)} \,,
\end{equation}
where $\bar{\psi} = \psi - a_2(t) \delta(x) - a_3(t) \delta{}^{(1)}(x) $. 
%\end{svgraybox}
\end{warning}

%\vspace{-0.5in} 
\subsection{First-order reduction}
%\vspace{-0.175in} 
Thanks to Theorem~\ref{thm:WaveMod-dirac-field}, we can develop our DG method for the modified problem~\eqref{eq:generic-dirac-fieldMod-dirac-field}
and numerically solve for $\bar{\psi}$. At this point, we follow the approach of Ref.~\cite{field2009discontinuous}
and introduce the auxiliary variables,
\begin{equation}
\hat{\phi} = \partial_x{\bar{\psi}} \,, \qquad  \pi = - \partial_t \bar{\psi} \,,
\end{equation}
from which the original second-order wave equation~\eqref{eq:generic-dirac-field} can be rewritten 
as the following first-order system,
\begin{equation}
\begin{bmatrix}
\bar{\psi} \\
\pi \\
\hat{\phi}\\
\end{bmatrix}_{t}
+
\begin{bmatrix}
0 & 0 & 0 \\
0 & 0 & 1 \\
0 & 1 & 0\\ 
\end{bmatrix}
\begin{bmatrix}
\bar{\psi} \\
\pi \\
\hat{\phi} \\
\end{bmatrix}_{x}
+
\begin{bmatrix}
\pi \\
V \bar{\psi} \\
0 \\
\end{bmatrix}
= 
\begin{bmatrix}
0 \\
G(t) \delta(x) + F(t) \delta{}'(x) \\
0 \\
\end{bmatrix} \,.
\end{equation}
We find it convenient (this will be helpful in 
when deriving the numerical flux in Sec.~\ref{sec:nfulx-dirac-field}) 
to define a new distributional auxiliary variable
\begin{equation}
\phi = \hat{\phi} - F(t) \delta(x) \,,
\end{equation}
which removes all terms proportional to $\delta{}'(x)$,
\begin{equation} \label{eq:firstOrderv2-dirac-field}
\begin{bmatrix}
\bar{\psi} \\
\pi \\
\phi \\
\end{bmatrix}_{t}
+
\begin{bmatrix}
0 & 0 & 0 \\
0 & 0 & 1 \\
0 & 1 & 0\\ 
\end{bmatrix}
\begin{bmatrix}
\bar{\psi} \\
\pi \\
\phi \\
\end{bmatrix}_{x}
+
\begin{bmatrix}
\pi \\
V \bar{\psi} \\
0 \\
\end{bmatrix}
= 
\begin{bmatrix}
0 \\
G(t) \delta(x) \\
- \dot{F}(t) \delta{}(x)  \\
\end{bmatrix} \,.
\end{equation}
System~\eqref{eq:firstOrderv2-dirac-field} can be written more compactly as,
\begin{equation}
\label{eq:firstOrderv1-dirac-field}
\partial_t U + \partial_x F + \hat{V} = S(t) \delta(x) \,,
\end{equation}
for the system vector $U$, flux vector $F$, potential $\hat{V}$, and source vector $S$:
\begin{equation}
\begin{split}
U = \left[\bar{\psi}, \pi, \phi\right]^T \,, \quad & \quad
F(U) = \left[0, \phi, \pi \right]^T \,, \\ 
\hat{V} = \left[\pi, V \bar{\psi} , 0\right]^T \,,  \quad & \quad
S = \left[0, G(t) , - \dot{F}(t)\right]^T \,.
\end{split}
\end{equation}

%\vspace{-0.4in} 
\subsection{Distributional constraint}
\label{sec:AuxConstraints-dirac-field}
%\vspace{-0.175in} 

A solution to the first-order system~\eqref{eq:firstOrderv2-dirac-field} is also a solution to the original
PDE~\eqref{eq:generic-dirac-field} provided the distributional constraint
\begin{equation}
\label{eq:distributional_constraint-dirac-field}
C(t,x) = \phi - \partial_x{\bar{\psi}} + F(t) \delta(x) \,,
\end{equation}
vanishes. One can show that $C(t,x)$ obeys (upon setting $G=V=0$ for simplicity):
\begin{equation}
- \partial_t^2 C + \partial_x^2 C  = 0 \,.
\end{equation} 
Thus we conclude that if the initial data implies 
$C(0,x) = \dot{C}(0,x) = 0$, and our physical boundary condition is compatible with Eq.~\eqref{eq:distributional_constraint-dirac-field}, then $C = 0$ for all future times.

For certain applications, the exact initial data is unknown. If one is interested in the late-time behavior of the problem
due to the forcing term, trivial initial data ($\bar{\psi} = \phi = \pi = 0$) is often supplied instead. 
Trivial data results in an impulsive
(i. e. discontinuous in time) start-up, and a key question is if
a physical solution eventually emerges from such trivial initial data. The answer is unfortunately no.
Under the assumption of trivial initial data we have 
$C(0,x) = F(0) \delta(x)$ and $\dot{C}(0,x) = 0$
\footnote{The term $\dot{\phi}(0,x)$ is found from evaluation of the evolution 
equation~\eqref{eq:firstOrderv2-dirac-field}, $\dot{\phi} = -\pi{}'- \dot{F}(t) \delta{}(x)$, at $t=0$.}, giving
\begin{equation}
C(t,x) = \frac{1}{2} F(0) \left[\delta(x+t) + \delta(x-t)\right] \,.
\end{equation}
In numerical simulations, this manifests as a localized feature that advects off the computational grid.
We also observe a
constraint-violating spurious (or ``junk") solution develop in it's wake.
At arbitrarily late times, the physical and numerical solution will appear to differ by a time-independent offset. To see this, let $\bar{\psi}_{\rm exact}$ be the exact particular solution to Eq.~\eqref{eq:generic-dirac-fieldMod-dirac-field} and $\bar{\psi}_{\rm impulsive}$ be the solution to the first-order system~\eqref{eq:firstOrderv2-dirac-field} subject to trivial initial data. Then $\bar{\psi}_{\rm CV} = \bar{\psi}_{\rm impulsive} - \bar{\psi}_{\rm exact}$ is the spurious solution due to constraint violation. In Sec.~\ref{sec:experiments-dirac-field} we provide numerical evidence that
\begin{equation}
\bar{\psi}_{\rm CV} = \frac{1}{2} F(0) \left[H(x+t) H(-x) + H(x-t) H(x) \right] \,.
\end{equation}
is the spurious solution, as well as showing two ways to remove it.

%\vspace{-0.3in} 
\section{Discontinuous Galerkin Method}
\label{sec:DG-dirac-field}
%\vspace{-0.175in} 

To solve the wave equation~\eqref{eq:generic-dirac-field}
we first transform it into a simpler form using Theorem~\eqref{thm:WaveMod-dirac-field}
then carry out a fully first-order reduction. 
This section describes the nodal DG method we have implemented to numerically solve
the resulting system~\eqref{eq:firstOrderv1-dirac-field} subject to the constraint~\eqref{eq:distributional_constraint-dirac-field}.
The method is exactly the one first proposed in Ref.~\cite{field2009discontinuous}, and
so we only briefly summarize the key ideas. Indeed, a key contribution of our paper is
to show the methods of Ref.~\cite{field2009discontinuous} continue to be applicable 
for more challenging problems such as Eq.~\eqref{eq:generic-dirac-field}.

%\vspace{-0.275in} 
\subsection{The source-free method}
%\vspace{-0.175in}

We divide the spatial domain into $N$ non-overlapping subdomains $a=x_0 < x_1 < \dots < x_N = b$
and denote $\mathsf{D}^j = [x_{j-1},x_j] $ as the $j^{\rm th}$ subdomain. In this one-dimensional setup,
the points $\{x_i\}_{i=1}^{N-1}$ locate the internal subdomain interfaces, and we require one of them to be $x=0$.
In each subdomain, 
each component of the vectors $U$, $F$, and $V$ are expanded in a polynomial basis, which are taken
to be degree-$k$ Lagrange interpolating polynomials $\{ \ell_i(x) \}_{i=0}^k$ defined from Legendre-Gauss-Lobatto nodes.
The time-dependent coefficients of this
expansion (e.g. on subdomain $j$: $\pi_h^j = \sum_{i=0}^k \pi_i(t) \ell_i(x)$) 
are the unknowns we solve for. We directly approximate $\bar{\psi}$, $\pi$, $\phi$, and $V$ and other terms 
arising in Eq.~\eqref{eq:firstOrderv1-dirac-field}, such as $\bar{\psi}V$, are achieved through pointwise products, for example
$\bar{\psi}_h V_h$.

On each subdomain, we follow the standard DG procedure by requiring the residual to satisfy 
\begin{equation}
\int_{\mathsf{D}^j} \left[\frac{ \partial U_h^j}{\partial_t} 
+ \frac{ \partial F_h^j}{\partial_x} + V_h^j\right] \ell_i^j dx  = 
\left[ \left(F_h^j - F^*\right) \ell_i^j \right]_{x_{j-1}}^{x^j}  \,,
\end{equation}
for all basis functions. Here we use an upwind numerical flux, $F^*(U_h)$, that depends on the values of the numerical solution 
from both sides of the interface.
These integrals can be pre-computed on a reference interval, leading to a coupled system of ordinary differential equations (see Eq.~47 of Ref.~\cite{field2009discontinuous}) that can be integrated in time. 

%\vspace{-0.275in}
\subsection{Modifications to the numerical flux}
\label{sec:nfulx-dirac-field}
%\vspace{-0.175in}

\begin{figure}[htb]
\begin{center}
\includegraphics[scale=0.35]{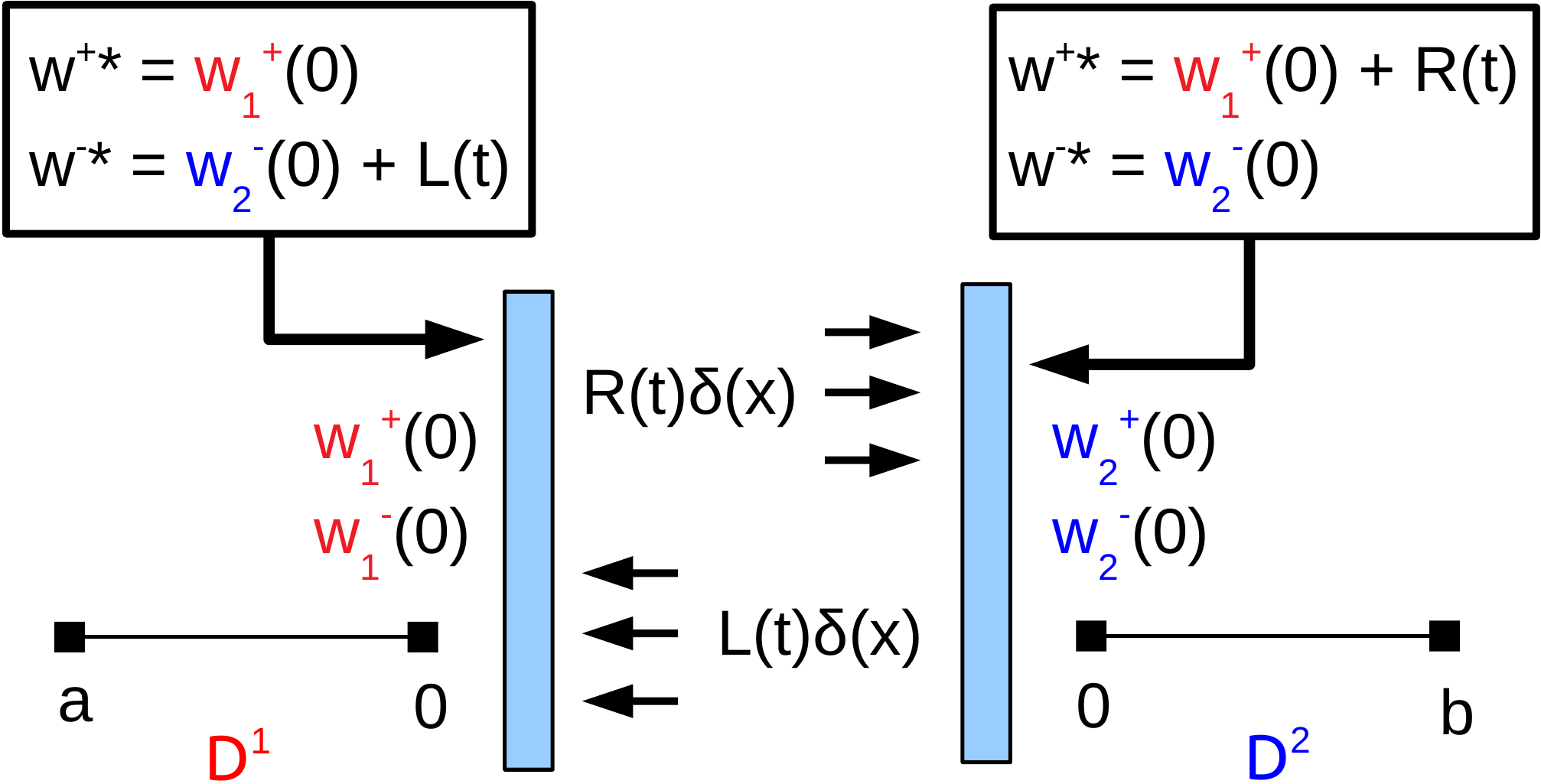}
\end{center}
\caption{Cartoon of the numerical flux modification enacted at $x=0$
to solve Eq.~\eqref{eq:generic-dirac-field} after its been
rewritten in terms of characteristic variables~\eqref{eq:advection_from_wave-dirac-field}.
Here we show a generic
time-dependent source term ``located between"
subdomain \textcolor{red}{$\mathsf{D}^{1}$} and \textcolor{blue}{$\mathsf{D}^{2}$}. 
Interfaces are represented schematically as light blue rectangles.
The system~\eqref{eq:advection_from_wave-dirac-field} includes two characteristic variables. The term
$R(t)\delta(x)$ sources the right moving wave and $L(t)\delta(x)$ sources the left moving wave.
The upwinded numerical flux is modified according to this directionality. 
Sec.~\ref{sec:nfulx-dirac-field} provides further discussion of the setup.}
\label{Fig:waveCartoon-dirac-field}
\end{figure}

With non-zero source terms, the numerical flux evaluated at the interface $x=0$ will be modified through additional terms. The form of these new terms were derived in Eq.~58 of Ref.~\cite{field2009discontinuous}. Instead of reproducing those results here, we provide an alternative viewpoint leading to the same result.

We begin by writing Eq.~\eqref{eq:firstOrderv1-dirac-field} in terms of characteristic variables,
\begin{equation} 
\begin{split}
    \partial_{t}w^+ + \partial_{x}w^+ + \frac{1}{2}V\bar{\psi} = \frac{1}{2}G(x)\delta(x) - \frac{1}{2}\dot{F}(t)\delta(x) = R(t) \delta(x) \label{eq:advection_from_wave-dirac-field} \\
    \partial_{t}w^- - \partial_{x}w^- + \frac{1}{2} V\bar{\psi} = \frac{1}{2}G(x)\delta(x) + \frac{1}{2}\dot{F}(t)\delta(x) = L(t) \delta(x)
\end{split}
\end{equation}
where $w^{\pm} = \left( \pi \pm \phi \right)/2$ and note that 
$\pi = w^{+} + w^{-}$ and $\phi = w^{+} - w^{-}$. 
We now have two copies of an advection equation with source terms proportional to $\delta(x)$. The equation 
for $w^+$ ($w^-$) describes a wave moving from left-to-right (right-to-left).
This allows us to apply the DG method described in the introduction: viewing the source term ``between" the two subdomains, we associate the entire contribution of $R(t)$
with the subdomain to the right and the entire contribution of $L(t)$ with the subdomain to the left. Fig.~\ref{Fig:waveCartoon-dirac-field} provides a schematic of this procedure for a simple 2 domain setup as well as the corresponding modification to the upwind numerical flux 
$(w^{\pm})^*$. Returning to the original system, the numerical flux is modified to the
left (right boundary point of $\mathsf{D}^{1}$) and right (left boundary point of $\mathsf{D}^{2}$) of $\delta(x)$ by
\begin{equation} 
F^*_{\rm left} \rightarrow F^*_{\rm left} + \left[0, -L(t), L(t) \right]^T \,, \qquad F^*_{\rm right} \rightarrow F^*_{\rm right} +  \left[0, R(t), R(t) \right]^T \,.
\end{equation}

\subsection{Defining the Heaviside function on the DG grid}
\label{sec:nfulx-dirac-field}

In our multi-domain setup, $x=0$ is both one of the interface locations and the location of the solution's discontinuity. When providing initial data, for example, we will sometimes need to evaluate the Heaviside function at $x=0$. Given the expected behavior of the solution, we will evaluate the Heaviside differently depending on the problem. Consider, for example, the advection equation, $\partial_t \psi + \partial_x \psi = \cos(t) \delta^{(1)}(x)$,  describing a right-moving wave and a two-subdomain setup, $\mathsf{D}^1 = [a,0]$ and $\mathsf{D}^2 = [0,b]$. In this case, we would evaluate the Heaviside according 
to $H(x)|_{\mathsf{D}^1} = 0$ and $H(x) |_{\mathsf{D}^2} = 1$. 
Now consider an advection equation describing a left-moving wave, 
$-\partial_t \psi + \partial_x \psi = \cos(t) \delta^{(1)}(x)$. For this problem, we would instead use 
$H(-x)|_{\mathsf{D}^1} = 1$ and $H(-x) |_{\mathsf{D}^2} = 0$.

%\vspace{-.375in}
\section{Distributional solutions to the 1+1 wave equation} \label{app:solutions-dirac-field}
%\vspace{-.175in}

This section presents exact solutions to the distributionally-forced $1+1$ wave equation. These solutions will be used in Sec.~\ref{sec:experiments-dirac-field} for testing our numerical scheme. 

Our recipe for solving
\begin{equation} \label{eq:ExactProblem-dirac-field}
-\partial_t^2 \Psi(t,x) + \partial_x^2 \Psi(t,x) = F(t) \delta^{(s)}(x) \, ,
\end{equation}
amounts to first solving 
\begin{equation} \label{eq:ModelPDE-dirac-field}
-\partial_t^2 \Psi(t,x;c,s) + \partial_x^2 \Psi(t,x;c,s) = F(t) \delta^{(s)}(x+c) \, ,
\end{equation}
for $s=0$ 
followed by an application of Eq.~\eqref{eq:generator-dirac-field}.
We shall view $c \in \mathbb{R}$ and $s \in \mathbb{Z}_{\geq 0}$ as parameters, and the solution $\Psi(t,x;c,s)$ as parameterized by them.

A solution to \eqref{eq:ModelPDE-dirac-field} can be found by the method of Green's function.
Recall the fundamental solution, $G(t,x;\widetilde{t},\widetilde{x})$, for
\begin{equation*}
-\partial_t^2 G + \partial_x^2 G = \delta(t-\widetilde{t})\delta(x-\widetilde{x}) \, ,
\end{equation*}
can be written in terms of the Heaviside~\cite{stakgold2011green}
\begin{equation*}
G(t,x;\widetilde{t},\widetilde{x}) = -\frac{1}{2} H(t-\widetilde{t} - |x-\widetilde{x}| ) \, .
\end{equation*}
Thus the solution to Eq.~\eqref{eq:ModelPDE-dirac-field} with $s=0$ can be written as 
\begin{align} \label{eq:PsiGreen-dirac-field}
\Psi(t,x;c,s=0) & = -\frac{1}{2} \int_0^t \int_{-\infty}^{\infty}  H(t-\widetilde{t} - |x-\widetilde{x}| ) F(\widetilde{t}) \delta(\widetilde{x}+c) d \widetilde{t} d \widetilde{x} \nonumber \\
%& = -\frac{1}{2} \int_0^t  H(t-\widetilde{t} - |x+c| ) F(\widetilde{t}) d \widetilde{t} \nonumber \\
%& = \frac{1}{2} \int_{t-|x+c|}^{-|x+c|}  H(y) F(t-|x+c| - y) d y \nonumber \\
%& = -\frac{1}{2} \int_{0}^{t-|x+c|}  H(y) F(t-|x+c| - y) d y \nonumber \\
& = -\frac{1}{2} \int_{0}^{t-|x+c|} F(t-|x+c| - y) d y \,,
\end{align}
where $y = t-\widetilde{t} - |x+c|$, and we have restricted to times $t \geq 0$ for which
the Heaviside is zero whenever $t - |x+c| < 0$. 
This $s=0$ solution generates an entire family of solutions corresponding to $s>0$. 
Clearly $\partial_c^s \Psi(t,x;c,s=0)$ solves Eq.~\eqref{eq:ModelPDE-dirac-field},
and so the particular solution of
\begin{equation*} 
-\partial_t^2 \Psi(t,x;0,s) + \partial_x^2 \Psi(t,x;0,s) = F(t) \delta^s(x) \, ,
\end{equation*}
is given by
\begin{equation} \label{eq:generator-dirac-field}
\Psi(t,x;0,s) = \partial_c^s \Psi(t,x;c,0) \bigg\rvert_{c=0} \,.
\end{equation}
We now provide explicit constructions for the cases considered in the numerical experiment section.

\begin{svgraybox} 
Let $F(t) = \cos(t)$ and $s=0$, then the generating function is
\begin{equation*}
\Psi_c(t,x;c,0) = 
- \frac{1}{2} \sin(t-|x+c|) \,,
\end{equation*}
and setting $c=0$ gives 
\[
\Psi(t,x) = - \frac{1}{2} \sin(t-|x|) \,.
\]
\vspace{-.175in}
\end{svgraybox}

\begin{svgraybox} 
Let $F(t) = \cos(t)$ and $s=1$, then the generating function is
\begin{equation*}
\partial_c \Psi_c(t,x;c,0) = \frac{1}{2} \mathrm{sgn}(x+c) \cos(t-|x+c|) \,,
\end{equation*}
and setting $c=0$ gives 
\begin{equation}
\label{eq:solution_s1-dirac-field}
\Psi(t,x) = \frac{1}{2} \mathrm{sgn}(x) \cos(t-|x|) \,.
\end{equation}
\vspace{-.175in}
\end{svgraybox}

\begin{svgraybox} 
Let $F(t) = \cos(t)$ and $s=2$, then the generating function is
\begin{align*}
\partial_c^2 \Psi_c(t,x;c,0) & = \delta(x+c)\cos(t-|x+c|) + \frac{1}{2} \mathrm{sgn}(x+c) \sin(t-|x+c|) \mathrm{sgn}(x+c) \\
& = \delta(x+c)\cos(t) + \frac{1}{2} \sin(t-|x+c|) \,,
\end{align*}
and setting $c=0$ gives 
\begin{equation}
\label{eq:solution_s2-dirac-field}
\Psi(t,x) = \delta(x)\cos(t) + \frac{1}{2} \sin(t-|x|) \,.
\end{equation}
\vspace{-.175in}
\end{svgraybox}

%\vspace{-.375in}
\section{Numerical results}
\label{sec:experiments-dirac-field}
%\vspace{-.175in}

\begin{figure}[!htb]
    \begin{center}
    \includegraphics[scale=0.38]{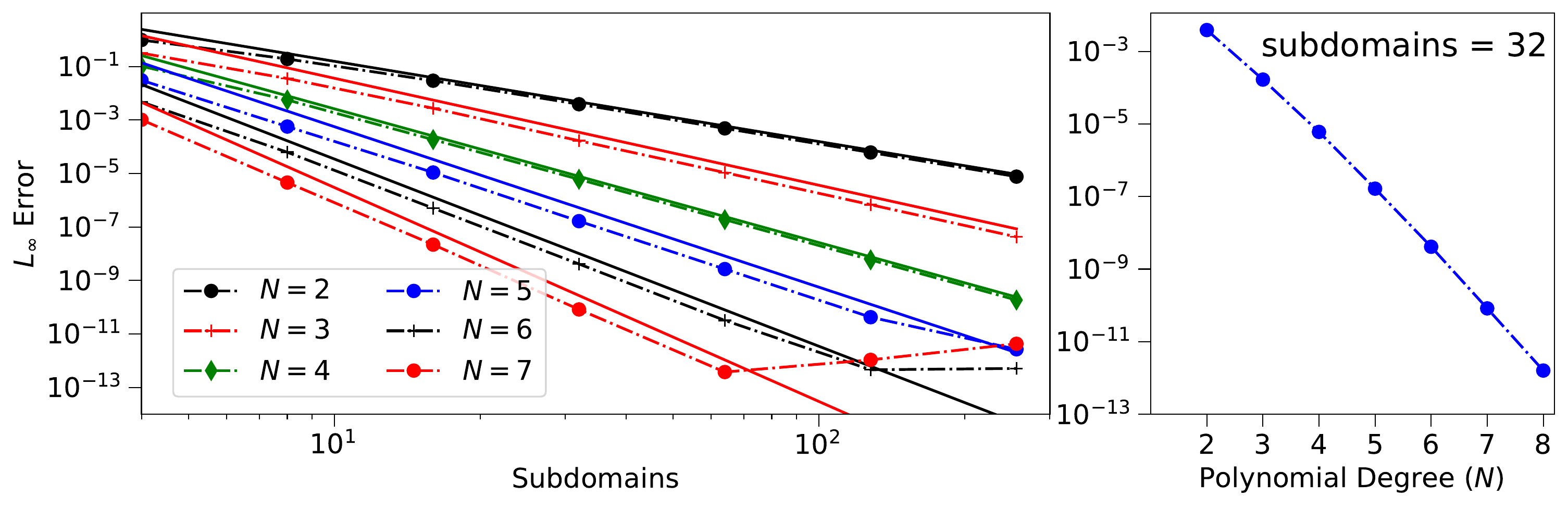}
    \end{center}
\caption{Convergence of the numerical solution for a sequence of grids for the problem setup described in Sec.~\ref{sec:exp1-dirac-field}. We consider convergence by increasing the number of elements (left) and polynomial degree (N) of the numerical approximation (right). These error profiles, computed 
as $\max_{x \in [-10,10]} \left|\bar{\psi}_{\rm exact}(10,x) - \bar{\psi}_{\rm numerical}(10,x) \right|$, are typical of our DG scheme when the solution is smooth. Notably, the error is also monitored at $x=0$. {\bf Left panel}: For a fixed value of polynomial degree, the approximation error decreases with a power law (dashed line) at a rate which closely matches the expected rate of $-(N+1)$ (solid line). For the cases $N=\{6,7\}$, round-off error effects become noticeable around $10^{-13}$. {\bf Right panel}: The DG scheme achieves exponential convergence in the approximation error as the polynomial degree is increased.}
\label{fig:prob1_6_1-dirac-field}
\end{figure}

\subsection{Wave equation with a $\delta^{(2)}(x)$ source term}
\label{sec:exp1-dirac-field}
%\vspace{-.175in}
We consider
\begin{equation}
- \partial_t^2 \psi + \partial_x^2 \psi =  \cos(t) \delta^{(2)}(x)\,,
\end{equation}
whose solution is given by Eq.~\eqref{eq:solution_s2-dirac-field}.
We will check the convergence of our numerically generated solution against
this exact solution. Before discretization, we remove some of the equation's 
singular structure by Theorem~\ref{thm:WaveMod-dirac-field}: 
let $\bar{\psi} = \psi - \cos(t) \delta(x)$ and solve
Eq.~\eqref{eq:transformed_example-dirac-field} after setting $V = a_0 = a_1 = a_3 = 0$. 
At the physical boundary points we choose fluxes that enforce simple Sommerfeld
boundary conditions and take the initial data from the exact solution.
We solve our problem on the domain $[a, b] = [-10, 10]$, set the final time T = 10,
and choose a $\Delta t$ sufficiently small such that the Runge--Kutta's timestepping error
is below the spatial discretization error. 
Despite the exact solution being both non-smooth and containing a term proportional to $\delta(x)$, Fig.~\ref{fig:prob1_6_1-dirac-field} 
shows the spectral (exponential) convergence in
the approximation. For a fixed polynomial degree $k$, the scheme's rate of convergence is observed to be $k+1$. 
Please see Ref.~\cite{field2009discontinuous} for numerical experiments with a non-zero potential.

%\vspace{-.375in}
\subsection{Persistent spurious solutions from distributional constraint violations}
%\vspace{-.175in}

\begin{figure}[!htb]
    \begin{center}
    \includegraphics[scale=0.4]{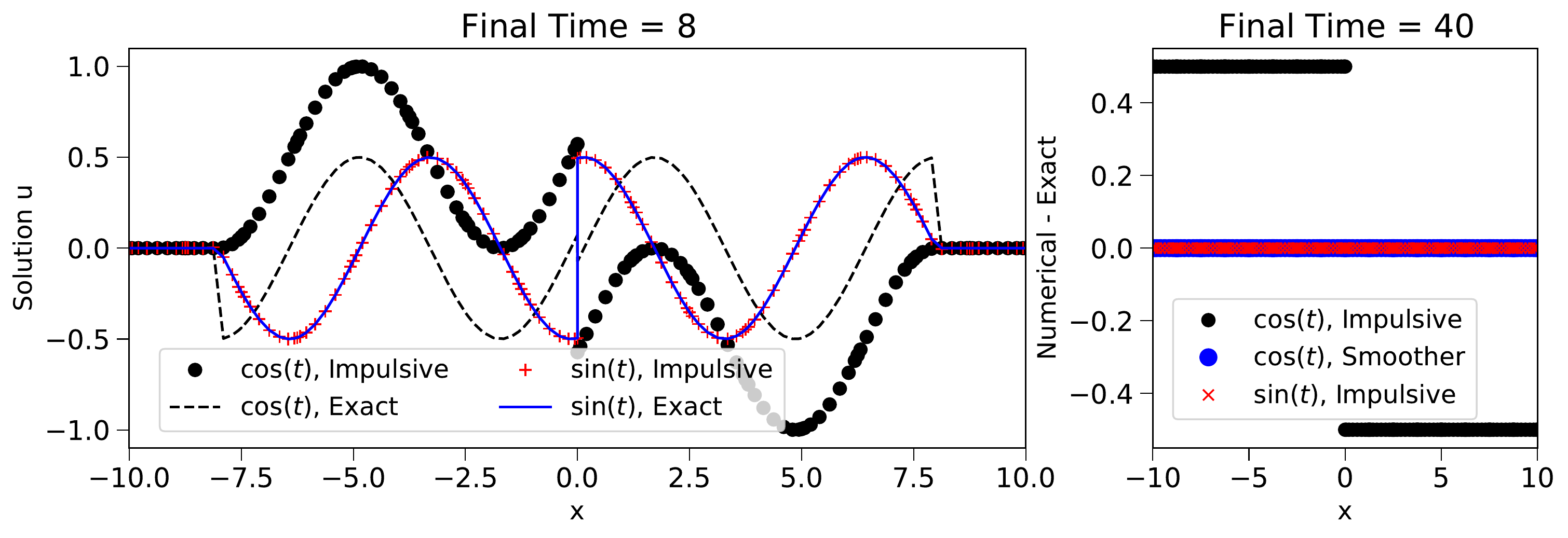}
    \end{center}
\caption{
For certain applications, the exact initial data is unknown. Here we consider possible effects on 
the numerical solution when
the initial data (here taken to be trivial data) leads to an impulsively started problem.
The left panel shows a snapshot of the numerical solution at $T=8$ where we see
that for a $\cos(t) \delta^{(1)}(x)$ source term the numerical and exact solution 
do not agree. In the right panel we plot the difference between the numerical
and exact solution and find that the 
``time-independent" spurious solution that has developed is $a_1(0) \left[H(-x)-H(x) \right] / 2$. This
is not a generic feature of the problem: no spurious solution appears 
when the source term is smoothly turned on (right figure; blue circles) or when the 
when the source term is instead taken to be $\sin(t) \delta^{(1)}(x)$.}
\label{fig:constrain_violation-dirac-field}
\end{figure}

We consider 
\begin{equation}
\label{eq:exp2_eq-dirac-field}
- \partial_t^2 \psi + \partial_x^2 \psi = a_1(t) \delta^{(1)}(x)\,,
\end{equation}
for two different cases: $a_1(t) = \sin(t)$ and $a_1(t) = \cos(t)$, the latter's
exact solution is given by Eq.~\eqref{eq:solution_s1-dirac-field}.
We solve our problem on the domain $[a, b] = [-10, 10]$ and 
we choose fluxes that enforce simple Sommerfeld
boundary conditions. When the initial data is 
taken from the exact solution the scheme's convergence properties are identical to those shown in Fig.~\ref{fig:prob1_6_1-dirac-field}.
In light of the Sec.~\ref{sec:AuxConstraints-dirac-field}'s discussion on distributional constraint violations,
we will check the for the appearance of persistent spurious solutions
when supplying trivial initial data. 

When $a_1(t) = \cos(t)$, the constraint violation is $C(t,x) = 0.5 \left[ \delta(x+t) + \delta(x-t) \right]$
and the constraint-violating spurious solution 
\begin{equation}
\bar{\psi}_{\rm CV} = \frac{1}{2} \left[H(x+t) H(-x) + H(x-t) H(x) \right] \,.
\end{equation}
is the offset that develops inside the 
future domain influenced by $(t,x)=(0,0)$. Figure \ref{fig:constrain_violation-dirac-field} (left) shows the numerical solution (black circles) offset from the exact solution (dashed black line) at $T=8$, and by $T=40$ the spurious solution has contaminated the entire computational domain (black circles).
When $a_1(t) = \sin(t)$, the constraint violation and spurious solution vanish; this too is confirmed by Fig.~\ref{fig:constrain_violation-dirac-field} (red data on both left and right panels). 
And so we see that the problematic spurious solution can be made
to vanish if it is possible to arrange 
the problem such that $F(0) = 0$.

With neither the correct initial data nor the ability to arrange $F(0) = 0$, 
a more general solution to this problem is to modify the source term
\begin{equation}\label{eq:smoothFandG-dirac-field}
F(t) \rightarrow
\left\{\begin{array}{rcl}
f(t;\tau, \delta)
F(t)
& & \text{for } 0 \leq t \leq \tau\\
F(t) & & \text{for } t > \tau,
\end{array} \right.,
\end{equation}
where $f(t;\tau, \delta) = {\textstyle \frac{1}{2}} [\mathrm{erf}(\sqrt{\delta}(t - \tau/2)+1]$
turns on the source term~\cite{field2010persistent} over the timescale $\tau$. We select 
$\tau=30$ and $\delta=.15$, which yields $f(0) \approx 10^{-16}$ and $f(t) = 1$ for $t > 30$.
Both the constraint violation and spurious solution now vanish, as shown in the right panel
of Fig.~\ref{fig:constrain_violation-dirac-field} (Blue circles).

%\vspace{-.375in}
\section{Final Remarks}
\label{sec:Final-dirac-field}
%\vspace{-.175in}

We have shown that the high--order accurate discontinuous Galerkin method developed in Ref.~\cite{field2009discontinuous} 
is applicable to the wave equation~\eqref{eq:generic-dirac-field} when written in fully first-order form~\eqref{eq:firstOrderv1-dirac-field}.
In particular, Ref.~\cite{field2009discontinuous} 
considered a wave equation with source terms of the form $a(t,x) \delta(x) + a_1(t,x) \delta{}'(x)$. In theorem~\ref{thm:WaveMod-dirac-field} we show that one can always write Eq.~\eqref{eq:generic-dirac-field} in this form, allowing for immediate application of their method to this generalized problem. The method maintains pointwise spectral convergence even at the source's location where the solution may be discontinuous, singular, or both. The numerical error has been quantified by comparing against exact
distributional solutions, and we have presented a procedure for finding particular solutions for any $\delta^{(n)}(x)$.

Our choice for writing the second-order scalar equation~\eqref{eq:generic-dirac-field} in first-order form~\eqref{eq:firstOrderv1-dirac-field}
relies on an auxiliary variable that must satisfy a distributional constraint~\eqref{eq:distributional_constraint-dirac-field}. While this constraint vanishes for all times if it does at the initial time, for many realistic problems the initial data is not known and satisfying the distributional constraint may be challenging. For trivial initial data, we show the constraint violation advects off the computational grid, leaving behind a time-independent constraint-violating spurious (or ``junk") solution in it's wake. We discuss two remedies that can be used to prevent the problematic spurious solution from appearing.

\section*{Acknowledgments}
We thank Manas Vishal for providing an independent check of Theorem 1 using Mathematica.
The authors acknowledge support of NSF Grants No. PHY-2010685 (G.K) and No. DMS-1912716 (S.F, S.G, and G.K), AFOSR Grant No. FA9550-18-1-0383 (S.G) and Office of Naval Research/Defense University Research Instrumentation Program (ONR/DURIP) Grant No. N00014181255. This material is based upon work supported by the National Science Foundation under Grant No. DMS-1439786 while a subset of the authors were in residence at the Institute for Computational and Experimental Research in Mathematics in Providence, RI, during the Advances in Computational Relativity program.

%%%%%%%%%%%%%%%%%%%%%%%%%%%%%%%%%%%%%%%%%%%%%%%
% \section*{References}
%%%%%%%%%%%%%%%%%%%%%%%%%%%%%%%%%%%%%%%%%%%%%%%
% \pagebreak
%\FloatBarrier
%\bibliographystyle{siamplain}
%\bibliography{references }
\bibliographystyle{spmpsci}
%\bibliography{}

%  To generate file contents for a single latex file
%    (1) make main.pdf using References/References as usual
%    (2) comment out the line below, and copy/paste contents from main.bbl
%\bibliography{References/References}
%    (3) pdflatex main.tex (don't use the makefile)

\end{document}